\numberwithin{equation}{section}
\theoremstyle{plain}
\newtheorem{thm}{\protect\theoremname}[section]
  \theoremstyle{definition}
  \newtheorem{defn}[thm]{\protect\definitionname}
  \theoremstyle{plain}
  \newtheorem{lem}[thm]{\protect\lemmaname}
  \theoremstyle{remark}
  \newtheorem{rem}[thm]{\protect\remarkname}
  \theoremstyle{plain}
  \newtheorem{prop}[thm]{\protect\propositionname}
  \theoremstyle{plain}
  \newtheorem{cor}[thm]{\protect\corollaryname}
  \theoremstyle{remark}
  \newtheorem*{acknowledgement*}{\protect\acknowledgementname}
  \providecommand{\acknowledgementname}{Acknowledgement}
  \providecommand{\corollaryname}{Corollary}
  \providecommand{\definitionname}{Definition}
  \providecommand{\lemmaname}{Lemma}
  \providecommand{\propositionname}{Proposition}
  \providecommand{\remarkname}{Remark}
\providecommand{\theoremname}{Theorem}
\begin{document}
\global\long\def\e{\varepsilon}
\global\long\def\N{\mathbb{N}}
\global\long\def\Z{\mathbb{Z}}
\global\long\def\Q{\mathbb{Q}}
\global\long\def\RR{\mathbb{R}}
\global\long\def\CC{\mathbb{C}}
\global\long\def\G{\mathbb{G}}
\global\long\def\HH{\mathbb{H}}

\global\long\def\H{\mathcal{H}}
\global\long\def\J{\mathcal{J}}
\global\long\def\K{\mathcal{K}}
\global\long\def\a{\alpha}
\global\long\def\be{\beta}
\global\long\def\om{\omega}
\global\long\def\z{\zeta}
\global\long\def\gnsmap{\upeta}
\global\long\def\Aa{\mathcal{A}}
\global\long\def\JJ{\mathscr{J}}
\global\long\def\M{\mathrm{M}}

\global\long\def\Ree{\operatorname{Re}}
\global\long\def\Img{\operatorname{Im}}
\global\long\def\linspan{\operatorname{span}}
\global\long\def\slim{\operatorname*{s-lim}}
\global\long\def\clinspan{\operatorname{\overline{span}}}
\global\long\def\co{\operatorname{co}}
\global\long\def\inv{\operatorname{Inv}}
\global\long\def\CAP{\mathrm{CAP}}
\global\long\def\presb#1#2{\prescript{}{#1}{#2}}

\global\long\def\tensor{\otimes}
\global\long\def\tensormin{\mathbin{\tensor_{\mathrm{min}}}}
\global\long\def\tensorn{\mathbin{\overline{\otimes}}}

\global\long\def\A{\forall}

\global\long\def\i{\mathrm{id}}

\global\long\def\one{\mathds{1}}
\global\long\def\tr{\mathrm{tr}}
\global\long\def\Ww{\mathds{W}}
\global\long\def\wW{\text{\reflectbox{\ensuremath{\Ww}}}\:\!}
\global\long\def\op{\mathrm{op}}
\global\long\def\WW{{\mathds{V}\!\!\text{\reflectbox{\ensuremath{\mathds{V}}}}}}

\global\long\def\Linfty#1{L^{\infty}(#1)}
\global\long\def\Lone#1{L^{1}(#1)}
\global\long\def\LoneSharp#1{L_{\sharp}^{1}(#1)}
\global\long\def\Ltwo#1{L^{2}(#1)}
\global\long\def\Cz#1{C_{0}(#1)}
\global\long\def\CzU#1{C_{0}^{u}(#1)}
\global\long\def\CUcomp#1{C^{u}(#1)}
\global\long\def\Ccomp#1{C(#1)}
\global\long\def\Cb#1{C_{b}(#1)}
 \global\long\def\Cc#1{C_{c}(#1)}
\global\long\def\Cr#1{C_{r}^{*}(#1)}
\global\long\def\linfty#1{\ell^{\infty}(#1)}
\global\long\def\lone#1{\ell^{1}(#1)}
\global\long\def\ltwo#1{\ell^{2}(#1)}
\global\long\def\cz#1{\mathrm{c}_{0}(#1)}
\global\long\def\Pol#1{\mathrm{Pol}(#1)}
\global\long\def\Ltwozero#1{L_{0}^{2}(#1)}
\global\long\def\Irred#1{\mathrm{Irr}(#1)}

\global\long\def\Ad#1{\mathrm{Ad}(#1)}
\global\long\def\VN#1{\mathrm{VN}(#1)}
\global\long\def\d{~\mathrm{d}}
\global\long\def\t{\mathrm{t}}
\global\long\def\tp{\mathbin{\xymatrix{*+<.7ex>[o][F-]{\scriptstyle \top}}
 } }
\global\long\def\tpsmall{\mathbin{\xymatrix{*+<.5ex>[o][F-]{\scriptscriptstyle \top}}
 } }
\global\long\def\tpr{\mathbin{\xymatrix{*+<.7ex>[o][F-]{\scriptstyle \bot}}
 } }
\global\long\def\tprsmall{\mathbin{\xymatrix{*+<.5ex>[o][F-]{\scriptscriptstyle \bot}}
 } }

\title{Amenability of locally compact quantum groups and their unitary co-representations}

\author{Chi-Keung Ng}

\address{Chern Institute of Mathematics and LPMC, Nankai University, Tianjin
300071, China}

\email{ckng@nankai.edu.cn}

\author{Ami Viselter}

\address{Department of Mathematics, University of Haifa, 31905 Haifa, Israel}

\email{aviselter@staff.haifa.ac.il}
\begin{abstract}
We prove that amenability of a unitary co-representation $U$ of a
locally compact quantum group passes to unitary co-representations
that weakly contain $U$. This generalizes a result of Bekka, and
answers affirmatively a question of B{\'e}dos, Conti and Tuset. As
a corollary, we extend to locally compact quantum groups a result
of the first-named author, which characterizes amenability of a locally
compact group $G$ by nuclearity of the reduced group $C^{*}$-algebra
$\Cr G$ and an additional condition.
\end{abstract}

\maketitle

\section*{Introduction}

A well-known result of Lance says that if a locally compact group
$G$ is amenable then its reduced group $C^{*}$-algebra $\Cr G$
is nuclear, and that the converse holds when $G$ is discrete \citep[Proposition 4.1 and Theorem 4.2]{Lance__nuclear_C_alg}
(but not generally; see Connes \citep[Corollary 7]{Connes__classification_of_inj_factors}).
It is thus interesting to look for a condition whose combination with
nuclearity of $\Cr G$ is equivalent to amenability of $G$ for an
arbitrary locally compact group $G$. This problem was solved recently
by the first-named author \citep[Theorem 8]{Ng__strictly_amen_rep},
who proved that $G$ is amenable if and only if $\Cr G$ is nuclear
and possesses a tracial state. In \prettyref{sec:amen_LCQGs} we extend
this theorem to locally compact \emph{quantum} groups in the sense
of Kustermans and Vaes, replacing traciality with a suitable ``noncommutative''
condition. Our result bears some resemblance to recent characterizations
of amenability in terms of various notions of injectivity, which have
proven to admit many applications \citep{Soltan_Viselter__note_amenability_LCQGs,Crann_Neufang__amn_cov_inj,Crann__amn_cov_inj_2}.
However, it focuses on the dual reduced $C^{*}$-algebra rather than
the dual $L^{\infty}$-algebra.

A key tool used in our proof is that of amenability of (unitary) co-representations
of locally compact quantum groups. This notion was introduced for
groups in the fundamental work of Bekka \citep{Bekka__amen_unit_rep}.
It is related to group amenability by the fact that a locally compact
group $G$ is amenable if and only if every representation of $G$
is amenable, if and only if the left regular representation of $G$
is amenable. Another useful result of \citep{Bekka__amen_unit_rep}
asserts that if $\pi_{1},\pi_{2}$ are representations of $G$ such
that $\pi_{1}$ is amenable and is weakly contained in $\pi_{2}$,
then $\pi_{2}$ is also amenable. B{\'e}dos, Conti and Tuset \citep{Bedos_Conti_Tuset__amen_co_amen_alg_QGs_coreps}
and B{\'e}dos and Tuset \citep{Bedos_Tuset_2003} introduced amenability
of co-representations of locally compact quantum groups, generalizing
Bekka's notion. One question they left open was whether amenability
was well-behaved with respect to weak containment as proved for groups
by Bekka. We provide an affirmative answer to this question in \prettyref{sec:amen_co_reps}.
It is then employed to establish the main result of \prettyref{sec:amen_LCQGs}.
We remark that amenability of co-representations of Kac algebras was
called ``weak Bekka amenability'' in \citep{Ng__amen_rep_Reiter}
up to a difference in the convention of what a co-representation is.

\section{Preliminaries}

For a (complex) Hilbert space $\H$ we denote by $B(\H)$, respectively
$K(\H)$, the $C^{*}$-algebra of all bounded, respectively compact,
operators on $\H$. For $\z,\eta\in\H$ we define $\om_{\z,\eta}\in B(\H)_{*}$
by $\om_{\z,\eta}(x):=\left\langle x\z,\eta\right\rangle $ ($x\in B(\H)$)
and $\om_{\z}:=\om_{\z,\z}$. Representations of $C^{*}$-algebras
are assumed to be nondegenerate. For a $C^{*}$-algebra $A$, we write
$\i$ for the identity map on $A$ and $\one$ for the unit of $A$,
if exists. We denote by $\M(A)$ the multiplier algebra of $A$. For
details on multiplier algebras, the strict topology and related topics,
consult \citep{Lance}. The symbols $\tensormin$ and $\tensorn$
stand for the minimal tensor product of $C^{*}$-algebras and the
normal spatial tensor product of von Neumann algebras, respectively.
We will use terminology and results from operator space theory; see
\citep{Effros_Ruan__book} as a general reference.

A \emph{locally compact quantum group} (abbreviated LCQG) is a pair
$\G=\left(\Linfty{\G},\Delta\right)$, where $\Linfty{\G}$ is a von
Neumann algebra and $\Delta$ is a co-multiplication, namely a normal
unital $*$-homomorphism from $\Linfty{\G}$ to $\Linfty{\G}\tensorn\Linfty{\G}$
that is co-associative: $(\Delta\tensor\i)\Delta=(\i\tensor\Delta)\Delta$,
admitting a left-invariant weight and a right-invariant weight \citep{Kustermans_Vaes__LCQG_C_star,Kustermans_Vaes__LCQG_von_Neumann,Van_Daele__LCQGs}.
The precise definition of left/right invariance will not be needed
here explicitly, and so we refer the reader to the above references
for details, as well as for the following facts. Each LCQG $\G$ has
a \emph{dual} LCQG, denoted by $\hat{\G}$. The von Neumann algebras
$\Linfty{\G}$ and $\Linfty{\hat{\G}}$ act standardly on the same
Hilbert space $\Ltwo{\G}$. A very important object is the left regular
co-representation of $\G$, which is a multiplicative unitary $W\in\Linfty{\G}\tensorn\Linfty{\hat{\G}}$
that satisfies $\Delta(x)=W^{*}(\one\tensor x)W$ for all $x\in\Linfty{\G}$.
The von Neumann algebra $\Linfty{\G}$ has a canonical weakly dense
$C^{*}$-subalgebra $\Cz{\G}$, and we have $W\in\M(\Cz{\G}\tensormin\Cz{\hat{\G}})$.
We write $\Lone{\G}$ for the predual $\Linfty{\G}_{*}$.

A (unitary) \emph{co-representation} of a LCQG $\G$ on a Hilbert
space $\H$ is a unitary operator $U\in\M(\Cz{\G}\tensormin K(\H))$
that satisfies $(\Delta\tensor\i)(U)=U_{13}U_{23}$. The \emph{universal}
picture of $\G$ involves another $C^{*}$-algebra, $\CzU{\G}$. There
exists a co-representation $\wW\in\M(\Cz{\G}\tensormin\CzU{\hat{\G}})$
of $\G$ with the property that there is a bijection between co-representations
$U$ of $\G$ and representations $\pi$ of $\CzU{\hat{\G}}$ given
by $U=(\i\tensor\pi)(\wW)$. For this, see \citep{Kustermans__LCQG_universal}.

The simplest examples of LCQGs are given by locally compact groups
$G$. The associated $L^{\infty}$, $C_{0}$ and $C_{0}^{u}$ algebras
are just $\Linfty G$, $\Cz G$ and $\Cz G$, respectively, and $\Delta$
maps a function $f\in\Linfty G$ to the function $\Delta(f)\in\Linfty G\tensorn\Linfty G\cong\Linfty{G\times G}$
given by $(t,s)\mapsto f(ts)$, $t,s\in G$. The dual of $G$ is the
LCQG $\hat{G}$ whose associated algebras $\Linfty{\hat{G}}$, $\Cz{\hat{G}}$
and $\CzU{\hat{G}}$ are the (left) group von Neumann algebra $\VN G$,
the reduced group $C^{*}$-algebra $\Cr G$ and the full group $C^{*}$-algebra
$C^{*}(G)$ of $G$, respectively, and the co-multiplication of $\hat{G}$
maps $\lambda_{g}$ to $\lambda_{g}\tensor\lambda_{g}$ for every
$g$, where $\left(\lambda_{g}\right)_{g\in G}$ is the left regular
representation of $G$.

A LCQG $\G$ is called \emph{compact} if $\Cz{\G}$ is unital. This
is equivalent to the left- and right-invariant weights being equal
and finite. A LCQG is called \emph{discrete} if its dual is compact.
See \citep{Woronowicz__symetries_quantiques,Effros_Ruan__discrete_QGs,Van_Daele__discrete_CQs},
and also \citep{Runde__charac_compact_discr_QG} for the equivalence
of different characterizations.

\section{\label{sec:amen_co_reps}Amenability of co-representations and weak
containment}

In this section we extend an important result of Bekka \citep{Bekka__amen_unit_rep}
to LCQG co-representations. We begin with two definitions from \citep{Ng__amen_rep_Reiter,Bedos_Conti_Tuset__amen_co_amen_alg_QGs_coreps,Bedos_Tuset_2003}.
\begin{defn}
A co-representation $U$ of a LCQG $\G$ on a Hilbert space $\H$
is called \emph{left amenable}, respectively \emph{right amenable},
if there exists a state $m$ of $B(\H)$ such that $m\left((\om\tensor\i)\left(U^{*}(\one\tensor x)U\right)\right)=\om(\one)m(x)$,
respectively $m\left((\om\tensor\i)\left(U(\one\tensor x)U^{*}\right)\right)=\om(\one)m(x)$,
for every $x\in B(\H)$ and $\om\in\Lone{\G}$. The state $m$ is
said to be a left-invariant, respectively right-invariant, mean of
$U$.
\end{defn}
\begin{defn}
For $i=1,2$, let $U_{i}$ be a co-representation of a LCQG $\G$
on a Hilbert space $\H_{i}$, and write $\pi_{i}$ for the associated
representation of $\CzU{\hat{\G}}$ on $\H_{i}$. We say that $U_{1}$
is \emph{weakly contained} in $U_{2}$ if $\pi_{1}$ is weakly contained
in $\pi_{2}$ \citep[Section 3.4]{Dixmier__C_star_English}, that
is, $\ker\pi_{2}\subseteq\ker\pi_{1}$.
\end{defn}
The following result is a generalization of \citep[Corollary 5.3]{Bekka__amen_unit_rep}.
It was proved for strong instead of weak containment in \citep[Proposition 7.14]{Bedos_Conti_Tuset__amen_co_amen_alg_QGs_coreps},
and for discrete quantum groups in \citep[Corollary 9.7]{Bedos_Conti_Tuset__amen_co_amen_alg_QGs_coreps}.
It is mentioned in \citep[p.~49]{Bedos_Conti_Tuset__amen_co_amen_alg_QGs_coreps}
that the general case is open and of interest. We will use it below
to establish \prettyref{thm:amen_nuc}.
\begin{thm}
\label{thm:amen_weak_contain}Let $U_{1},U_{2}$ be co-representations
of a LCQG $\G$. Suppose that $U_{1}$ is weakly contained in $U_{2}$.
If $U_{1}$ is left (respectively, right) amenable, then so is $U_{2}$.
\end{thm}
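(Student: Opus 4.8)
The plan is to follow the strategy behind Bekka's original argument, transporting amenability not through an abstract mean but through the tensor product of $U$ with its contragredient and the almost-invariant vectors of the latter, for which weak containment is manifestly well-behaved. To see why a direct transport is problematic, first unwind the hypothesis: writing $C_i:=\overline{\pi_i(\CzU{\hat\G})}$, the inclusion $\ker\pi_2\subseteq\ker\pi_1$ yields a surjective $*$-homomorphism $\theta\colon C_2\to C_1$ with $\theta\circ\pi_2=\pi_1$, and, after extending $\theta$ nondegenerately to multiplier algebras and evaluating on $\wW$, $(\i\tensor\theta)(U_2)=U_1$. This records the containment at the operator level, but it cannot be used to push the invariant mean of $U_1$ to $B(\H_2)$, since left amenability quantifies over all of $B(\H)$ while $\theta$ relates only the subalgebras $C_i$. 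I therefore work at the universal level throughout.

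The key reformulation — the quantum analogue of Bekka's characterization — is that a co-representation $U$ on $\H$ is left amenable if and only if the trivial (counit) co-representation $\epsilon$ is weakly contained in the tensor product co-representation $\overline{U}\tp U$ acting on the Hilbert space $\mathrm{HS}(\H)$ of Hilbert--Schmidt operators, where $\overline{U}$ denotes the contragredient and $\tp$ the tensor product of co-representations; equivalently, $\overline{U}\tp U$ admits a net of almost-invariant unit vectors. The translation uses the standard identification of the normal states of $B(\H)$ with unit vectors of $\mathrm{HS}(\H)$ via their density operators: under it, the adjoint coaction $x\mapsto U^{*}(\one\tensor x)U$ governing left-invariance of the mean becomes the coaction implemented by $\overline{U}\tp U$, so that invariance of the mean corresponds to asymptotic invariance of the vectors. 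Proving this equivalence is the crux. The direction producing vectors from a mean is a Day-type convexity argument: approximate the mean weak$^{*}$ by normal states (which are weak$^{*}$ dense in the states of $B(\H)$ by Goldstine), pass to the Hilbert--Schmidt picture, and upgrade the resulting weak near-invariance to norm near-invariance by a convexity/separation argument. The converse direction, a mean from almost-invariant vectors, is obtained as a weak$^{*}$ cluster point of the associated normal states.

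It then remains to check compatibility of weak containment with the two operations. For the contragredient, $\overline{U}$ corresponds to composing $\pi_U$ with the universal unitary antipode of $\hat\G$ (an anti-$*$-automorphism of $\CzU{\hat\G}$) together with conjugation of the Hilbert space; since anti-automorphisms and conjugation carry closed ideals bijectively and preserve inclusions, $U_1\prec U_2$ gives $\overline{U_1}\prec\overline{U_2}$. For the tensor product, $U\tp V$ corresponds to the representation $(\pi_U\tensor\pi_V)\circ\hat\Delta^{u}$, with $\hat\Delta^{u}$ the universal co-multiplication of $\CzU{\hat\G}$. If $\pi_1\prec\pi_2$ and $\sigma_1\prec\sigma_2$, then $\pi_1\tensor\sigma_1$ factors through $\pi_2\tensor\sigma_2$ on the minimal tensor product, so $\ker(\pi_2\tensor\sigma_2)\subseteq\ker(\pi_1\tensor\sigma_1)$; pulling this inclusion back along the $*$-homomorphism $\hat\Delta^{u}$ preserves it, whence $U_1\tp V_1\prec U_2\tp V_2$. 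Combining the two, $\overline{U_1}\tp U_1\prec\overline{U_2}\tp U_2$.

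Finally I would assemble the pieces. By the reformulation applied to $U_1$, left amenability gives $\epsilon\prec\overline{U_1}\tp U_1$; together with the containment just established and transitivity of weak containment (immediate from its definition via kernel inclusions), $\epsilon\prec\overline{U_1}\tp U_1\prec\overline{U_2}\tp U_2$ forces $\epsilon\prec\overline{U_2}\tp U_2$, so the reformulation applied to $U_2$ yields left amenability of $U_2$. The right-amenable case is entirely symmetric, using the opposite adjoint coaction $x\mapsto U(\one\tensor x)U^{*}$ and the tensor product $U\tp\overline{U}$. I expect the main obstacle to be the reformulation of the second paragraph: both establishing the mean/almost-invariant-vector dictionary and getting the left/right side and the tensor order of $\overline{U}\tp U$ to match the stated invariance, while the weak-containment compatibilities and the final assembly are comparatively formal.
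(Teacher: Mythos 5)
The decisive gap is the step you yourself flag as the crux: the claimed equivalence between left amenability of $U$ and weak containment of the trivial co-representation in $\overline{U}\tp U$. For locally compact \emph{groups} this is Bekka's theorem, and quantum versions are available for Kac algebras (essentially Ng's earlier work on Reiter's property, cited in the paper) and for discrete quantum groups (B\'edos--Conti--Tuset); but it is \emph{not} a theorem for general LCQGs, and the argument you sketch for it does not survive the passage to the quantum setting. First, for non-Kac $\G$ the naive conjugate of a unitary co-representation need not be unitary: $\overline{U}$ has to be defined through the unitary antipode in the first leg and the transpose in the second, and this modular twist already destroys the exact correspondence, under $\mathrm{HS}(\H)\cong\H\tensor\overline{\H}$, between the adjoint coaction $x\mapsto U^{*}(\one\tensor x)U$ on $B(\H)$ and the coaction implemented by $\overline{U}\tp U$. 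Second, and more fundamentally, the Day/Powers--St{\o}rmer upgrade fails: invariance of a mean is tested against slices $(\om\tensor\i)$ with $\om\in\Lone{\G}$, so after approximating the mean by density operators $T_{i}$ the near-invariance one obtains has the form $\left\Vert (\om\tensor\i)\left(U(\one\tensor T_{i})U^{*}\right)-\om(\one)T_{i}\right\Vert _{1}\to0$; but slicing does not commute with taking square roots, so Powers--St{\o}rmer applied to these trace-class operators produces nothing resembling almost-invariant Hilbert--Schmidt vectors for $\overline{U}\tp U$. In the group case the coaction is pointwise conjugation by $\pi(g)$ and the slices are evaluations at group elements, which is exactly why this obstruction is invisible there.

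This is not a local difficulty one can expect to patch. The hard direction of your reformulation, applied to the single co-representation $U=W$, would settle what the paper calls arguably the most important open question in the area: $W$ is left amenable whenever $\G$ is amenable (B\'edos--Tuset, Theorem 4.1, as cited in the paper), and by Kustermans' half-lifted comultiplications (the quantum Fell absorption principle) the universal representation of any co-representation of the form $V\tp W$ or $W\tp V$ has kernel containing that of the reducing map $\CzU{\hat{\G}}\to\Cz{\hat{\G}}$; hence weak containment of the trivial co-representation in $\overline{W}\tp W$ would force the counit of $\CzU{\hat{\G}}$ to factor through $\Cz{\hat{\G}}$, i.e.\ co-amenability of $\hat{\G}$. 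So your key lemma implies ``$\G$ amenable $\Rightarrow\hat{\G}$ co-amenable'', which is open. Note finally that the reason you gave for abandoning the direct transport of the mean---that the homomorphism $\theta$ only relates the subalgebras $C_{i}$ rather than all of $B(\H_{i})$---is precisely what the paper repairs with Arveson's extension theorem: extend the multiplier extension of $\theta$ to a unital completely positive map $\Phi:B(\H_{2})\to B(\H_{1})$; then $(\i\tensor\Phi)(U_{2})=U_{1}$, so $U_{2}$ lies in the multiplicative domain of $\i\tensor\Phi$, whence $(\i\tensor\Phi)\left(U_{2}^{*}(\one\tensor x)U_{2}\right)=U_{1}^{*}(\one\tensor\Phi(x))U_{1}$ and $m_{U_{1}}\circ\Phi$ is a left-invariant mean for $U_{2}$. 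That soft argument is the entire proof, and it is exactly the route you ruled out.
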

We require the next lemma, which follows as a particular case from
\citep{Neufang__amp_cb_slice_maps}. For the reader's convenience,
we give its short proof.
\begin{lem}
\label{lem:id_tensor_Phi}Let $A,B,C$ be von Neumann algebras and
$\Phi:B\to C$ a completely bounded map. Then there exists a unique
completely bounded linear map $\i\tensor\Phi:A\tensorn B\to A\tensorn C$
such that 
\begin{equation}
(\om\tensor\i)((\i\tensor\Phi)(X))=\Phi((\om\tensor\i)(X))\qquad(\forall X\in A\tensorn B,\om\in A_{*}).\label{eq:id_tensor_Phi}
\end{equation}
It satisfies $\left\Vert \i\tensor\Phi\right\Vert _{cb}=\left\Vert \Phi\right\Vert _{cb}$.
\end{lem}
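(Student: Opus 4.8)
The plan is to identify the normal spatial tensor product with a space of completely bounded maps and then realise $\i\tensor\Phi$ as postcomposition with $\Phi$. I would start from the operator space duality recorded in \citep{Effros_Ruan__book}: for von Neumann algebras $M,N$ one has completely isometric identifications $(M\tensorn N)_{*}\cong M_{*}\widehat{\tensor}N_{*}$ (operator space projective tensor product), and hence $M\tensorn N\cong(M_{*}\widehat{\tensor}N_{*})^{*}\cong\mathrm{CB}(M_{*},N)$. Concretely, $X\in M\tensorn N$ corresponds to the map $T_{X}\colon M_{*}\to N$, $T_{X}(\om)=(\om\tensor\i)(X)$, so that under this identification the \emph{left slice maps are exactly the evaluations}. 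Applying this with $(M,N)=(A,B)$ and $(M,N)=(A,C)$ yields $A\tensorn B\cong\mathrm{CB}(A_{*},B)$ and $A\tensorn C\cong\mathrm{CB}(A_{*},C)$.

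Under these identifications I would \emph{define} $\i\tensor\Phi$ to be the postcomposition map $L_{\Phi}\colon\mathrm{CB}(A_{*},B)\to\mathrm{CB}(A_{*},C)$, $T\mapsto\Phi\circ T$, transported back to $A\tensorn B\to A\tensorn C$. This is well defined since the composition of completely bounded maps is completely bounded. The defining identity is then immediate: for $X\in A\tensorn B$ corresponding to $T_{X}$, the element $(\i\tensor\Phi)(X)$ corresponds to $\Phi\circ T_{X}$, so $(\om\tensor\i)((\i\tensor\Phi)(X))$ is the evaluation at $\om$ of $\Phi\circ T_{X}$, namely $\Phi(T_{X}(\om))=\Phi((\om\tensor\i)(X))$, which is precisely \prettyref{eq:id_tensor_Phi}. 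For uniqueness I would use that the functionals $\om\tensor\psi$ with $\om\in A_{*}$, $\psi\in C_{*}$ are total in $(A\tensorn C)_{*}$, so the family $\{(\om\tensor\i):\om\in A_{*}\}$ separates the points of $A\tensorn C$; hence \prettyref{eq:id_tensor_Phi} pins down $(\i\tensor\Phi)(X)$ for every $X$.

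It remains to compute the norm. The bound $\left\Vert \i\tensor\Phi\right\Vert _{cb}\le\left\Vert \Phi\right\Vert _{cb}$ is the standard estimate for postcomposition: at the $n$-th matrix level, via $M_{n}(\mathrm{CB}(A_{*},B))\cong\mathrm{CB}(A_{*},M_{n}(B))$, the map $L_{\Phi}$ becomes postcomposition with $\i_{M_{n}}\tensor\Phi$, whose completely bounded norm equals $\left\Vert \Phi\right\Vert _{cb}$, and postcomposition never increases it. For the reverse inequality I would use the completely isometric unital embeddings $B\to A\tensorn B$, $b\mapsto\one\tensor b$, and $C\to A\tensorn C$, $c\mapsto\one\tensor c$: a direct slice computation (using $(\om\tensor\i)(\one\tensor b)=\om(\one)b$ and that slices separate points) gives $(\i\tensor\Phi)(\one\tensor b)=\one\tensor\Phi(b)$. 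Thus $\Phi$ is recovered from $\i\tensor\Phi$ by conjugating with these complete isometries, whence $\left\Vert \Phi\right\Vert _{cb}\le\left\Vert \i\tensor\Phi\right\Vert _{cb}$, and equality follows.

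The main obstacle is almost entirely in setting up the correct operator-space duality: once $A\tensorn B\cong\mathrm{CB}(A_{*},B)$ is in hand with left slices identified as evaluations, the construction, the defining identity, uniqueness and the norm equality are all essentially formal. The one point genuinely requiring care is to verify that postcomposition $T\mapsto\Phi\circ T$ is completely bounded between $\mathrm{CB}(A_{*},B)$ and $\mathrm{CB}(A_{*},C)$ with completely bounded norm at most $\left\Vert \Phi\right\Vert _{cb}$, which rests on the matricial identification above together with the stability $\left\Vert \i_{M_{n}}\tensor\Phi\right\Vert _{cb}=\left\Vert \Phi\right\Vert _{cb}$.
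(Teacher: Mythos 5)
Your proof is correct and takes essentially the same route as the paper's: both use the Effros--Ruan completely isometric identifications $A\tensorn B\cong\mathcal{CB}(A_{*},B)$ and $A\tensorn C\cong\mathcal{CB}(A_{*},C)$ (under which left slices become evaluations), define $\i\tensor\Phi$ as postcomposition $T\mapsto\Phi\circ T$, and deduce $\left\Vert \i\tensor\Phi\right\Vert _{cb}=\left\Vert \Phi\right\Vert _{cb}$ from the action on elementary tensors. The only difference is one of detail: the paper leaves uniqueness and the cb-estimate for postcomposition as evident, whereas you spell them out.
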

\begin{proof}
Uniqueness is clear. The map $\mathcal{CB}(A_{*},B)\to\mathcal{CB}(A_{*},C)$
given by $T\mapsto\Phi\circ T$ for $T\in\mathcal{CB}(A_{*},B)$ is
evidently well defined and with $cb$-norm at most $\left\Vert \Phi\right\Vert _{cb}$.
Using the natural completely isometric identifications $A\tensorn B\cong\mathcal{CB}(A_{*},B)$
and $A\tensorn C\cong\mathcal{CB}(A_{*},C)$ as operator spaces \citep[Theorem 7.2.4 and Proposition 7.1.2]{Effros_Ruan__book}
we get a linear map $\i\tensor\Phi:A\tensorn B\to A\tensorn C$ with
$cb$-norm at most $\left\Vert \Phi\right\Vert _{cb}$ that satisfies
\prettyref{eq:id_tensor_Phi}. Since $(\i\tensor\Phi)(a\tensor b)=a\tensor\Phi(b)$
for all $a\in A$ and $b\in B$, this implies that $\left\Vert \i\tensor\Phi\right\Vert _{cb}=\left\Vert \Phi\right\Vert _{cb}$.
\end{proof}
Recall that a unital linear map between operator systems is completely
positive if and only if it is completely contractive.

\begin{proof}[Proof of \prettyref{thm:amen_weak_contain}]
Let $U_{i}$ be a co-representation of $\G$ on a Hilbert space $\H_{i}$,
and write $\pi_{i}$ for the associated representation of $\CzU{\hat{\G}}$
on $\H_{i}$ ($i=1,2$). By assumption, there exists a $*$-homomorphism
$\pi:\Img\pi_{2}\to\Img\pi_{1}$ given by $\pi\circ\pi_{2}=\pi_{1}$.
Denote by $\overline{\pi}$ the (unique) extension of $\pi$ to a
unital $*$-homomorphism $\M(\Img\pi_{2})\to\M(\Img\pi_{1})\subseteq B(\H_{1})$
(actually, the extension of $\pi$ to the trivial unitization of $\Img\pi_{2}$
would suffice). Viewing $\overline{\pi}$ as a representation of $\M(\Img\pi_{2})$
on $\H_{1}$, we extend it to a unital completely positive map $\Phi:B(\H_{2})\to B(\H_{1})$
by Arveson's extension theorem. Consider now the map $\i\tensor\Phi:\Linfty{\G}\tensorn B(\H_{2})\to\Linfty{\G}\tensorn B(\H_{1})$
given by \prettyref{lem:id_tensor_Phi}, which is unital and completely
positive as $\Phi$ is. The unitaries $U_{i}=(\i\tensor\pi_{i})(\wW)$,
$i=1,2$, satisfy $(\i\tensor\Phi)(U_{2})=U_{1}$, because for every
$\om\in\Lone{\G}$, 
\[
(\om\tensor\i)\left((\i\tensor\Phi)(U_{2})\right)=\Phi((\om\tensor\i)(U_{2}))=(\Phi\circ\pi_{2})((\om\tensor\i)(\wW))=\pi_{1}((\om\tensor\i)(\wW))=(\om\tensor\i)(U_{1}).
\]
Hence $U_{2}$ belongs to the multiplicative domain of $\i\tensor\Phi$
\citep[Theorem 3.18]{Paulsen__book_CB_maps_and_oper_alg}. As a result,
for every $x\in B(\H_{2})$, 
\[
(\i\tensor\Phi)\left(U_{2}^{*}(\one\tensor x)U_{2}\right)=(\i\tensor\Phi)(U_{2}^{*})(\one\tensor\Phi(x))(\i\tensor\Phi)(U_{2})=U_{1}^{*}(\one\tensor\Phi(x))U_{1}.
\]
If now $m_{U_{1}}$ is a left-invariant mean for $U_{1}$, then $m_{U_{1}}\circ\Phi$
is a left-invariant mean for $U_{2}$, because for every $x\in B(\H_{2})$
and $\om\in\Lone{\G}$,
\[
\begin{split}(m_{U_{1}}\circ\Phi)\left[(\om\tensor\i)\left(U_{2}^{*}(\one\tensor x)U_{2}\right)\right] & =m_{U_{1}}(\om\tensor\i)\left[(\i\tensor\Phi)\left(U_{2}^{*}(\one\tensor x)U_{2}\right)\right]\\
 & =m_{U_{1}}(\om\tensor\i)\left(U_{1}^{*}(\one\tensor\Phi(x))U_{1}\right)=\om(\one)(m_{U_{1}}\circ\Phi)(x).
\end{split}
\]
The proof for right amenability is similar.
\end{proof}

\section{\label{sec:amen_LCQGs}Amenability of locally compact quantum groups}

This section is devoted to \prettyref{thm:amen_nuc} below, which
generalizes the main result of \citep{Ng__strictly_amen_rep}. It
provides a condition that sits between amenability of a LCQG $\G$
and co-amenability of its dual. When $\G$ is discrete (or a group),
all three conditions are equivalent.
\begin{defn}[\citep{Desmedt_Quaegebeur_Vaes,Bedos_Tuset_2003}]
Let $\G$ be a LCQG.
\begin{enumerate}
\item We say that $\G$ is \emph{amenable} if it has a left-invariant mean,
namely a state $m$ of $\Linfty{\G}$ that satisfies 
\[
m\left((\om\tensor\i)\Delta(x)\right)=\om(\one)m(x)\qquad(\forall x\in\Linfty{\G},\om\in\Lone{\G}).
\]
\item We say that $\G$ is \emph{co-amenable} if there exists a state $\epsilon$
of $\Cz{\G}$ that satisfies $(\epsilon\tensor\i)(W)=\one$.
\end{enumerate}
\end{defn}
A locally compact group $G$ is amenable if and only if it is amenable
in the above sense when viewed as a LCQG. For every LCQG $\G$, co-amenability
of $\hat{\G}$ implies amenability of $\G$ \citep[Theorem 3.2]{Bedos_Tuset_2003}.
The converse holds when $\G$ is a locally compact group (by Leptin's
theorem) and when $\G$ is discrete (see for instance \citep{Tomatsu__amenable_discrete}).
Whether it is true in general is arguably the most important open
question in LCQG amenability theory.
\begin{thm}
\label{thm:amen_nuc}Let $\G$ be a LCQG. 
\begin{enumerate}
\item Consider the following conditions:
\begin{enumerate}[label=\textup{(\alph*)}]
\item \label{enu:amen_nuc_1_a}$\hat{\G}$ is co-amenable;
\item \label{enu:amen_nuc_1_b}$\Cz{\hat{\G}}$ is nuclear, and there exists
a state $\rho$ of $\Cz{\hat{\G}}$ that is invariant under the left
$C^{*}$-algebraic action of $\G$ on $\Cz{\hat{\G}}$, i.e., 
\begin{equation}
(\i\tensor\rho)\left(W^{*}(\one\tensor x)W\right)=\rho(x)\one\qquad(\forall x\in\Cz{\hat{\G}});\label{eq:amen_nuc}
\end{equation}
\item \label{enu:amen_nuc_1_c}$\G$ is amenable.
\end{enumerate}
Then \prettyref{enu:amen_nuc_1_a}$\implies$\prettyref{enu:amen_nuc_1_b}$\implies$\prettyref{enu:amen_nuc_1_c}. 
\item \label{enu:amen_nuc_2}Moreover, if $\G$ has trivial scaling group
(for instance, if $\G$ is a Kac algebra), then \prettyref{enu:amen_nuc_1_a}$\implies$\prettyref{enu:amen_nuc_2_trivial_tau}$\implies$\prettyref{enu:amen_nuc_1_b},
where
\begin{enumerate}[label=\textup{(b')}]
\item \label{enu:amen_nuc_2_trivial_tau}$\Cz{\hat{\G}}$ is nuclear and
admits a tracial state.
\end{enumerate}
\end{enumerate}
\end{thm}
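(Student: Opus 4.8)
The plan is to establish the two implication chains in Theorem~\ref{thm:amen_nuc} by exploiting the bijection between co-representations of $\G$ and representations of $\CzU{\hat\G}$, translating each algebraic condition into a statement about amenability or weak containment of the relevant co-representations, and then invoking Theorem~\ref{thm:amen_weak_contain}. The central observation is that the left regular co-representation $W\in\M(\Cz\G\tensormin\Cz{\hat\G})$ is itself a co-representation of $\G$, and that condition \ref{enu:amen_nuc_1_b}'s invariant state $\rho$ is precisely an invariance mean condition on $W$ of the type appearing in the definition of amenability of a co-representation. Likewise, co-amenability of $\hat\G$ should correspond to amenability (indeed co-amenability-type triviality) of the counit/trivial co-representation, while amenability of $\G$ should be equivalent to amenability of $W$ itself.

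For part~(1), I would argue as follows. To prove \ref{enu:amen_nuc_1_a}$\implies$\ref{enu:amen_nuc_1_b}: co-amenability of $\hat\G$ means the trivial (one-dimensional) co-representation is weakly contained in $W$, and co-amenability is known (by Bedos--Murphy--Tuset--type results) to be equivalent to nuclearity of $\CzU{\hat\G}=\Cz{\hat\G}$ together with the existence of a counit; the invariant state $\rho$ is then obtained by restricting the counit, and nuclearity of $\Cz{\hat\G}$ follows since co-amenability forces $\Cz{\hat\G}$ to be nuclear. The heart of the theorem is \ref{enu:amen_nuc_1_b}$\implies$\ref{enu:amen_nuc_1_c}. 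Here nuclearity of $\Cz{\hat\G}$ should let me realize $\Cz{\hat\G}$ as weakly contained in a nicely behaved co-representation (via a quasidiagonal/approximation argument coming from nuclearity), and the invariance of $\rho$ in \eqref{eq:amen_nuc} gives a left-invariant mean for the co-representation $W$ in the sense of the first definition. The key step is to convert the invariant state $\rho$ on $\Cz{\hat\G}$ into a left-invariant mean on all of $\Linfty\G$: I would use $\rho$ together with nuclearity to build, through a slice map and a state on $B(\Ltwo\G)$, a mean $m$ on $\Linfty\G$ satisfying $m((\om\tensor\i)\Delta(x))=\om(\one)m(x)$, thereby witnessing amenability of $\G$ directly via Definition of amenability.

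For part~(2), under the assumption that $\G$ has trivial scaling group, the invariance condition \eqref{eq:amen_nuc} simplifies because the left $C^*$-algebraic action and the modular/scaling data degenerate; a state invariant under the action should then be exactly a tracial state on $\Cz{\hat\G}$. Concretely, I would show that with trivial scaling group the adjoint action $x\mapsto W^*(\one\tensor x)W$ restricted appropriately intertwines with the convolution structure so that $\rho$ being invariant is equivalent to $\rho$ being a trace, mirroring the classical case in \citep{Ng__strictly_amen_rep} where traciality of a state on $\Cr G$ plays this role. This yields both \ref{enu:amen_nuc_2_trivial_tau}$\implies$\ref{enu:amen_nuc_1_b} (a trace is an invariant state) and the reverse reading needed for the chain, with \ref{enu:amen_nuc_1_a}$\implies$\ref{enu:amen_nuc_2_trivial_tau} following from the counit giving a trace when the scaling group is trivial.

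I expect the main obstacle to be the implication \ref{enu:amen_nuc_1_b}$\implies$\ref{enu:amen_nuc_1_c}, specifically the passage from an invariant state $\rho$ on the reduced dual $C^*$-algebra to a genuine left-invariant mean on the full von Neumann algebra $\Linfty\G$. Nuclearity of $\Cz{\hat\G}$ is essential here: it is what allows one to extend or approximate the relevant completely positive maps (as in Lemma~\ref{lem:id_tensor_Phi} and Arveson-type extensions) so that the invariance is not lost when moving from $\Cz{\hat\G}$ to $B(\Ltwo\G)$ and then restricting to $\Linfty\G$. The delicate point is ensuring that the constructed mean is invariant for \emph{all} of $\Lone\G$ and not merely on a dense subalgebra, which is where Theorem~\ref{thm:amen_weak_contain} on stability of amenability under weak containment becomes indispensable: identifying the correct co-representation weakly containing the data of $W$ and transporting its left-invariant mean is the crux of the argument.
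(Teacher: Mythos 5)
Your outline for part (1) is essentially the paper's own route: the implication \prettyref{enu:amen_nuc_1_a}$\implies$\prettyref{enu:amen_nuc_1_b} goes through co-amenability forcing nuclearity and the state with $(\i\tensor\rho)(W)=\one$ being a character, and \prettyref{enu:amen_nuc_1_b}$\implies$\prettyref{enu:amen_nuc_1_c} goes through nuclearity $\Rightarrow$ injectivity, an Arveson-type extension $\Phi$ of the (GNS or bidual) embedding, \prettyref{lem:id_tensor_Phi}, and either \prettyref{thm:amen_weak_contain} applied to $W_{\rho}\preceq W$ or a direct restriction of $\rho\circ\Phi$ to $\Linfty{\G}$. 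Two caveats, though. First, your parenthetical claim that co-amenability of $\hat{\G}$ is \emph{equivalent} to nuclearity plus a counit is not known (it is essentially the open problem the paper discusses); only the direction you actually use is available. Second, and more substantively, you assert that after extending via nuclearity ``the invariance is not lost,'' but you never identify the mechanism that makes this true: one must show that $W$ (or $W_{\rho}$) lies in the \emph{multiplicative domain} of $\i\tensor\Phi$, which follows because the slices $(\om\tensor\i)(W)$ lie in $\M(\Cz{\hat{\G}})$ and are fixed by $\Phi$, whence $(\i\tensor\Phi)\left(W^{*}(\one\tensor x)W\right)=W^{*}(\one\tensor\Phi(x))W$. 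Without this step the invariance of $\rho$ on $\Cz{\hat{\G}}$ simply does not propagate to $B(\Ltwo{\G})$; this is the crux of the paper's argument in both of its proofs, not a routine approximation.

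The genuine gap is in part (2). You claim that with trivial scaling group, invariance \prettyref{eq:amen_nuc} ``should then be exactly'' traciality, ``mirroring the classical case,'' and you give no argument for the direction the theorem needs, namely that every \emph{tracial} state of $\Cz{\hat{\G}}$ satisfies \prettyref{eq:amen_nuc}. This is precisely \prettyref{prop:triv_tau_tracial_state}, the hardest technical result in the paper, and it does not follow by formal degeneration of the action. The paper's proof requires: (i) that triviality of the scaling group makes the antipode $\kappa$ equal to the unitary antipode $R$, hence a bounded anti-automorphism; (ii) \prettyref{lem:antipode_W_extended}, extending the identity $\kappa\left((\i\tensor\theta)(W)\right)=(\i\tensor\theta)(W^{*})$ from $\Lone{\hat{\G}}$ to arbitrary $\theta\in\Cz{\hat{\G}}^{*}$; and (iii) a GNS construction in which traciality of $\rho$ yields an involutive anti-unitary $J$, followed by a delicate orthonormal-basis expansion and strong-convergence argument allowing one to apply $\kappa$ term by term to $(\i\tensor\rho)\left(W^{*}(\one\tensor x)W\right)$, flip the factors, and resum to $\rho(x)\one$. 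The classical analogy you lean on fails structurally: on $\Cr G$ the equivalence of conjugation-invariance and traciality exploits the group-like unitaries $\lambda_{g}$ spanning the algebra and implementing the action, and $\Cz{\hat{\G}}$ has no such spanning set. Finally, the converse direction you assert (invariant $\implies$ tracial) is neither needed for the theorem nor true in the generality you state it\textemdash the paper's remark records it only for groups and, for Haar states of duals of discrete quantum groups, precisely in the Kac case.
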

\begin{rem}
Observe that in contrast to the specific case of (locally compact)
groups, the second half of condition \prettyref{enu:amen_nuc_1_b}
is not intrinsic to the $C^{*}$-algebra $\Cz{\hat{\G}}$. When $\G$
is a locally compact group, \prettyref{eq:amen_nuc} is equivalent
to $\rho$ being tracial. When $\G$ is a discrete quantum group,
the Haar state $\rho$ of $\hat{\G}$ satisfies \prettyref{eq:amen_nuc}
if and only if $\G$ is a Kac algebra by \citep[Corollary 3.9 and its proof]{Izumi__non_comm_Poisson}
(note the difference in the conventions). Also, one cannot deduce
from \prettyref{thm:amen_nuc} that for discrete $\G$, nuclearity
of $\Cz{\hat{\G}}$ implies amenability of $\G$\textemdash whether
this is true remains an open question.
\end{rem}
Recall that the \emph{antipode} of $\G$ is a generally unbounded
operator $\kappa$ over $\Linfty{\G}$ that satisfies 
\begin{equation}
(\i\tensor\rho)(W)\in D(\kappa)\text{ and }\kappa((\i\tensor\rho)(W))=(\i\tensor\rho)(W^{*})\qquad(\forall\rho\in\Lone{\hat{\G}}).\label{eq:antipode_W}
\end{equation}
The antipode has a ``polar decomposition'' $\kappa=R\circ\tau_{-i/2}$,
where $R$ is the unitary antipode, which is an anti-automorphism
of $\Linfty{\G}$, and $\tau$ is the scaling group. Thus, when $\tau$
is trivial, $\kappa=R$. 
\begin{lem}
\label{lem:antipode_W_extended}Let $\G$ be a LCQG with trivial scaling
group. For every $\theta\in\Cz{\hat{\G}}^{*}$,
\begin{equation}
\kappa((\i\tensor\theta)(W))=(\i\tensor\theta)(W^{*}).\label{eq:antipode_W_extended}
\end{equation}
\end{lem}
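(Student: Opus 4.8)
The plan is to exploit the fact that triviality of the scaling group $\tau$ makes the antipode coincide with the unitary antipode, $\kappa=R$, which is an everywhere-defined, bounded, \emph{normal} $*$-anti-automorphism of $\Linfty{\G}$; in particular $R$ is continuous for the weak-$*$ topology $\sigma(\Linfty{\G},\Lone{\G})$. Since $(\i\tensor\theta)(W)\in\M(\Cz{\G})\subseteq\Linfty{\G}$ for every $\theta\in\Cz{\hat{\G}}^{*}$, both sides of \eqref{eq:antipode_W_extended} are well defined (there is no domain issue, as $D(\kappa)=\Linfty{\G}$), and the identity is already known on the subspace $\Lone{\hat{\G}}\subseteq\Cz{\hat{\G}}^{*}$ of normal functionals by \eqref{eq:antipode_W}. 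I would therefore establish \eqref{eq:antipode_W_extended} by extending it from $\Lone{\hat{\G}}$ to all of $\Cz{\hat{\G}}^{*}$ through a weak-$*$ density and continuity argument.

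Two standard facts make this work. First, $\Lone{\hat{\G}}$, viewed inside $\Cz{\hat{\G}}^{*}$ by restriction, is weak-$*$ dense there: if $a\in\Cz{\hat{\G}}$ is annihilated by every normal functional then $a=0$, because $\Lone{\hat{\G}}$ separates the points of $\Linfty{\hat{\G}}\supseteq\Cz{\hat{\G}}$, so the claim follows from the bipolar theorem. Second, for every $\mu\in\Lone{\G}$ the slices $(\mu\tensor\i)(W)$ and $(\mu\tensor\i)(W^{*})$ lie in $\Cz{\hat{\G}}$ itself (and not merely in its multiplier algebra): the former because $\Cz{\hat{\G}}$ is the norm-closed linear span of $\{(\mu\tensor\i)(W):\mu\in\Lone{\G}\}$, and the latter because $\Cz{\hat{\G}}$ is $*$-closed and $((\mu\tensor\i)(W))^{*}=(\mu^{*}\tensor\i)(W^{*})$ with $\mu^{*}$ again ranging over $\Lone{\G}$. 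Combined with the standard Fubini relation $\mu((\i\tensor\sigma)(W^{(*)}))=\sigma((\mu\tensor\i)(W^{(*)}))$ for $\mu\in\Lone{\G}$ and $\sigma\in\Cz{\hat{\G}}^{*}$, these show that $\sigma\mapsto(\i\tensor\sigma)(W)$, and likewise $\sigma\mapsto(\i\tensor\sigma)(W^{*})$, is continuous from $\Cz{\hat{\G}}^{*}$ with its weak-$*$ topology to $\Linfty{\G}$ with its weak-$*$ topology.

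Concretely, I would pick a net $(\rho_{j})$ in $\Lone{\hat{\G}}$ converging weak-$*$ to $\theta$ in $\Cz{\hat{\G}}^{*}$. Testing against an arbitrary $\mu\in\Lone{\G}$ and using the two facts above gives $(\i\tensor\rho_{j})(W)\to(\i\tensor\theta)(W)$ and $(\i\tensor\rho_{j})(W^{*})\to(\i\tensor\theta)(W^{*})$ weak-$*$ in $\Linfty{\G}$. Applying \eqref{eq:antipode_W} to each $\rho_{j}$ yields $\kappa((\i\tensor\rho_{j})(W))=(\i\tensor\rho_{j})(W^{*})$; passing to the weak-$*$ limit, the right-hand side converges to $(\i\tensor\theta)(W^{*})$, while the left-hand side converges to $\kappa((\i\tensor\theta)(W))$ by the weak-$*$ continuity of $\kappa=R$. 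Uniqueness of weak-$*$ limits then gives \eqref{eq:antipode_W_extended}.

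The one point that genuinely requires the hypothesis, and which I expect to be the crux, is the weak-$*$ continuity of $\kappa$: for a general LCQG the antipode is unbounded and no such limit passage is available, so the argument hinges precisely on $\tau$ being trivial, which turns $\kappa$ into the bounded normal map $R$. The remaining ingredients, namely the weak-$*$ density of the normal functionals and the fact that the slices of $W$ and $W^{*}$ stay inside $\Cz{\hat{\G}}$, are routine but must be in place for the convergence of the slice maps to transfer correctly.
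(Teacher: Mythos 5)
Your proof is correct and follows essentially the same route as the paper's: approximate $\theta$ in the weak-$*$ sense by a net of normal functionals, apply \prettyref{eq:antipode_W} termwise, and pass to the limit, the crux in both arguments being that triviality of the scaling group makes $\kappa=R$ a bounded normal map. The only (cosmetic) differences are that the paper obtains the approximating net via a Hahn--Banach extension of $\theta$ to $\Linfty{\hat{\G}}^{*}$ rather than your bipolar-theorem density argument, and that it moves $\kappa$ onto the predual (via $\om\mapsto\om\circ\kappa$) instead of invoking weak-$*$ continuity of $\kappa$ on the operator side.
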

\begin{proof}
Fix $\theta\in\Cz{\hat{\G}}^{*}$. Let $\tilde{\theta}\in\Linfty{\hat{\G}}^{*}$
be a Hahn\textendash Banach extension of $\theta$ and $\left(\theta_{i}\right)_{i\in\mathfrak{I}}$
be a net in $\Lone{\hat{\G}}$ that converges to $\tilde{\theta}$
in the $\sigma(\Linfty{\hat{\G}}^{*},\Linfty{\hat{\G}})$-topology.
For every $\om\in\Lone{\G}$, we have
\begin{equation}
\theta_{i}\left(((\om\circ\kappa)\tensor\i)(W)-(\om\tensor\i)(W^{*})\right)\xrightarrow[i\in\mathfrak{I}]{}\theta\left(((\om\circ\kappa)\tensor\i)(W)-(\om\tensor\i)(W^{*})\right).\label{eq:antipode_W_extended_1}
\end{equation}
But by \prettyref{eq:antipode_W}, the left-hand side of \prettyref{eq:antipode_W_extended_1}
is zero for all $i\in\mathfrak{I}$. We conclude that $(\om\circ\kappa)\left((\i\tensor\theta)(W)\right)=\om\left((\i\tensor\theta)(W^{*})\right)$
for every $\om\in\Lone{\G}$. This gives \prettyref{eq:antipode_W_extended}.
\end{proof}
\begin{rem}
Essentially the same proof, replacing $\Lone{\G}$ by $\LoneSharp{\G}$
and using \citep[Proposition A.1]{Brannan_Daws_Samei__cb_rep_of_conv_alg_of_LCQGs},
shows that for an arbitrary LCQG $\G$ (not necessarily with trivial
scaling group), \prettyref{eq:antipode_W} holds for all $\rho\in\Cz{\hat{\G}}^{*}$.
\end{rem}
The next result generalizes one direction of \citep[Corollary 3.9]{Izumi__non_comm_Poisson}
alluded to above.
\begin{prop}
\label{prop:triv_tau_tracial_state}Let $\G$ be a LCQG with trivial
scaling group. Then every tracial state $\rho$ of $\Cz{\hat{\G}}$
satisfies \prettyref{eq:amen_nuc}.
\end{prop}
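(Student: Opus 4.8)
The plan is to prove the pointwise identity \prettyref{eq:amen_nuc} directly, exploiting that the scaling group is trivial so that the antipode $\kappa$ coincides with the bounded unitary antipode $R$, and \prettyref{lem:antipode_W_extended} upgrades to the clean operator identity $(R\tensor\i)(W)=W^{*}$. Indeed, \prettyref{lem:antipode_W_extended} gives $R\bigl((\i\tensor\theta)(W)\bigr)=(\i\tensor\theta)(W^{*})$ for all $\theta\in\Cz{\hat{\G}}^{*}$; since $R\bigl((\i\tensor\theta)(W)\bigr)=(\i\tensor\theta)\bigl((R\tensor\i)(W)\bigr)$ and these $\theta$ separate the points of the second leg, this is equivalent to $(R\tensor\i)(W)=W^{*}$. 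Fixing $x\in\Cz{\hat{\G}}$ and writing $y:=(\i\tensor\rho)\bigl(W^{*}(\one\tensor x)W\bigr)$, the goal becomes $y=\rho(x)\one$; because $R$ is an involutive $*$-anti-automorphism with $R(\one)=\one$, it is enough to establish $R(y)=\rho(x)\one$.

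The heart of the argument is a two-sided order reversal. First I would reverse the first leg: using $R\circ(\i\tensor\rho)=(\i\tensor\rho)\circ(R\tensor\i)$ and the anti-multiplicativity of $R$, one computes (writing $W$ formally as $\sum_{i}a_{i}\tensor b_{i}$ with $a_{i}\in\Linfty{\G}$, $b_{i}\in\Linfty{\hat{\G}}$) that $R(y)=\sum_{i,j}R(a_{i})R(a_{j})^{*}\,\rho(b_{j}^{*}xb_{i})$. Next I would invoke traciality of $\rho$ to reverse the second leg, $\rho(b_{j}^{*}xb_{i})=\rho(xb_{i}b_{j}^{*})$, so that with $\nu:=\rho(x\,\cdot\,)$ one obtains $R(y)=(\i\tensor\nu)(\Xi)$, where $\Xi:=\sum_{i,j}R(a_{i})R(a_{j})^{*}\tensor b_{i}b_{j}^{*}$. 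The decisive point is that the indices now recombine into a genuine operator product: $\Xi=\bigl((R\tensor\i)(W)\bigr)\bigl((R\tensor\i)(W)\bigr)^{*}=W^{*}W=\one$, using exactly $(R\tensor\i)(W)=W^{*}$. Hence $R(y)=\nu(\one)\one=\rho(x)\one$, giving $y=\rho(x)\one$, which is \prettyref{eq:amen_nuc}. Note that the triviality of the scaling group is used only through $(R\tensor\i)(W)=W^{*}$; for nontrivial $\tau$ the bounded $R$ would inject the scaling and this collapse would break down.

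The main obstacle is purely analytic: the expansion $W=\sum_{i}a_{i}\tensor b_{i}$ is only formal, since $W$ lies in the multiplier (respectively von Neumann) tensor product rather than the algebraic one, and—more delicately—the components $b_{i}$ lie in $\M(\Cz{\hat{\G}})$ and not in $\Cz{\hat{\G}}$, so the cyclic rearrangement $\rho(b_{j}^{*}xb_{i})=\rho(xb_{i}b_{j}^{*})$ must be justified for a functional that is a priori only tracial on $\Cz{\hat{\G}}$. I would remove this in one of two equivalent ways. The robust option is to pass to the GNS representation of $\rho$: traciality furnishes a cyclic trace vector and a conjugation $J$ on $L^{2}(\rho)$; representing the second leg there and writing $y$ as a vector expression turns the second-leg reversal into conjugation by $J$, so that the three identities invoked ($R\circ(\i\tensor\rho)=(\i\tensor\rho)\circ(R\tensor\i)$, $(R\tensor\i)(W)=W^{*}$, and $W^{*}W=\one$) are honest operator identities and no literal sums appear. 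Alternatively, one extends $\rho$ canonically to a tracial state on $\M(\Cz{\hat{\G}})$ and replaces the formal expansion by slice-map calculus, using that $(\i\tensor\rho)$ is a $\Cz{\G}$-bimodule map and that the regrouping $\Xi=W^{*}W$ holds at the level of the actual operators. In either case the algebraic skeleton above is precisely the proof; only the bookkeeping changes.
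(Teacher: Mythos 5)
Your algebraic skeleton is exactly the idea of the paper's proof: apply $\kappa=R$ to reverse the order of factors in the first leg, use traciality of $\rho$ to reverse it in the second, and recombine via $\kappa\left((\i\tensor\theta)(W)\right)=(\i\tensor\theta)(W^{*})$ and $W^{*}W=\one$. The gap is that both of your proposed rigorizations assert away the only hard point. The object $R\tensor\i$ does not exist as a bounded map on $\Linfty{\G}\tensorn\Linfty{\hat{\G}}$ (or on $\Linfty{\G}\tensorn B(\H_{\rho})$): an anti-automorphism is transpose-like and in general not completely bounded ($\Vert R\tensor\i_{M_{n}}\Vert\geq n$ whenever $\Linfty{\G}$ contains matrix units of $M_{n}$), so \prettyref{lem:id_tensor_Phi} does not apply to $\Phi=R$, and consequently ``$R\circ(\i\tensor\rho)=(\i\tensor\rho)\circ(R\tensor\i)$'' is not an identity that can be applied to $W^{*}(\one\tensor x)W$. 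The equation $(R\tensor\i)(W)=W^{*}$ is meaningful only slicewise---it is just a restatement of \prettyref{lem:antipode_W_extended}---and a slicewise-defined map carries no (anti-)multiplicativity, so it gives no information about slices of the product $W^{*}(\one\tensor x)W$. More fundamentally, the calculus your computation runs on (reverse the factors in leg one while preserving their order in leg two) cannot be implemented by any single bounded map; the only honest operator map in this vicinity is the simultaneous reversal of both legs, $X\mapsto(\hat{J}\tensor J)X^{*}(\hat{J}\tensor J)$, where $\hat{J}$ is an anti-unitary implementing $R$ and $J$ is the GNS conjugation, and that map by itself sees neither the antipode relation nor traciality.

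For this reason, option (a)'s claim that ``no literal sums appear'' and option (b)'s claim that ``the regrouping $\Xi=W^{*}W$ holds at the level of the actual operators'' are both unjustified: in (b) there is no operator $\Xi$ at all prior to the formal expansion (that it exists and equals $W^{*}W$ is precisely what must be proved), and the $\Cz{\G}$-bimodule property of $\i\tensor\rho$ is irrelevant to a cyclic rearrangement in the second leg; the strict extension of $\rho$ to a trace on $\M(\Cz{\hat{\G}})$ is fine but does not address this. The only way to interleave anti-multiplicativity of $\kappa$ in leg one with cyclicity of $\rho$ in leg two is coefficient-wise: insert a resolution of the identity of $\H_{\rho}$, i.e.\ expand over an orthonormal basis $\left(\eta_{i}\right)$, and control the resulting bounded, strongly convergent nets of finite sums of products of slices so that the normal map $\kappa$ and the relevant functionals pass to the limit. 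That is exactly what the paper's proof does, via the device of \citep[Lemma A.5]{Kustermans_Vaes__LCQG_C_star}, with traciality entering as $\om_{\xi_{\rho},\eta}\circ\pi_{\rho}=\om_{J\eta,\xi_{\rho}}\circ\pi_{\rho}$ and a second expansion over the basis $\left(J\eta_{i}\right)$ yielding the slice of $W^{*}W(\one\tensor x)$. In short, your skeleton is the paper's argument, but the convergence bookkeeping you dismiss as removable is the actual mathematical content of the proposition; carrying out your option (a) honestly forces the sums back in and reproduces the paper's proof.
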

\begin{proof}
Write $(\H_{\rho},\pi_{\rho},\xi_{\rho})$ for the GNS construction
of $\rho$. Since $\rho$ is a trace, the formula $J\pi_{\rho}(a)\xi_{\rho}:=\pi_{\rho}(a^{*})\xi_{\rho}$,
$a\in\Cz{\hat{\G}}$, defines an involutive anti-unitary $J$ on $\H_{\rho}$.
Let $\left(\eta_{i}\right)_{i\in\mathfrak{I}}$ be an orthonormal
basis of $\H_{\rho}$. Denote by $\mathcal{F}(\mathfrak{I})$ the
set of finite subsets of $\mathfrak{I}$ directed by inclusion. By
a standard argument \citep[Lemma A.5 and its proof]{Kustermans_Vaes__LCQG_C_star},
for every $X,Y\in\M(\Cz{\G}\tensormin\Cz{\hat{\G}})$, the net 
\begin{multline*}
\left(\sum_{i\in F}((\i\tensor\om_{\xi_{\rho},\eta_{i}})(\i\tensor\pi_{\rho})(Y))^{*}(\i\tensor\om_{\xi_{\rho},\eta_{i}})(\i\tensor\pi_{\rho})(X)\right)_{F\in\mathcal{F}(\mathfrak{I})}\\
=\left(\sum_{i\in F}((\i\tensor(\om_{\eta_{i},\xi_{\rho}}\circ\pi_{\rho}))(Y^{*}))(\i\tensor(\om_{\xi_{\rho},\eta_{i}}\circ\pi_{\rho}))(X)\right)_{F\in\mathcal{F}(\mathfrak{I})}
\end{multline*}
in $\M(\Cz{\G})\subseteq B(\Ltwo{\G})$ is bounded, and converges
strongly to $(\i\tensor\om_{\xi_{\rho}})(\i\tensor\pi_{\rho})(Y^{*}X)=(\i\tensor\rho)(Y^{*}X)$.
Let $x\in\Cz{\hat{\G}}$. Taking $Y:=(\one\tensor x^{*})W$ and $X:=W$,
we deduce that the bounded net 
\[
\left(\sum_{i\in F}((\i\tensor(\om_{\eta_{i},\xi_{\rho}}\circ\pi_{\rho}))(W^{*}(\one\tensor x)))(\i\tensor(\om_{\xi_{\rho},\eta_{i}}\circ\pi_{\rho}))(W)\right)_{F\in\mathcal{F}(\mathfrak{I})}
\]
converges strongly to $(\i\tensor\rho)(W^{*}(\one\tensor x)W)$. Thus,
for every $\om\in\Lone{\G}$,
\[
\lim_{F\in\mathcal{F}(\mathfrak{I})}(\om\circ\kappa)\left[\sum_{i\in F}((\i\tensor(\om_{\eta_{i},\xi_{\rho}}\circ\pi_{\rho}))(W^{*}(\one\tensor x)))(\i\tensor(\om_{\xi_{\rho},\eta_{i}}\circ\pi_{\rho}))(W)\right]=(\om\circ\kappa\circ(\i\tensor\rho))(W^{*}(\one\tensor x)W).
\]
By traciality of $\rho$, for every $b,c\in\Cz{\hat{\G}}$, we have
\[
(\om_{\xi_{\rho},\pi_{\rho}(c)\xi_{\rho}}\circ\pi_{\rho})(b)=\rho(c^{*}b)=\rho(bc^{*})=(\om_{J\pi_{\rho}(c)\xi_{\rho},\xi_{\rho}}\circ\pi_{\rho})(b).
\]
As $\pi_{\rho}(\Cz{\hat{\G}})\xi_{\rho}$ is dense in $\H_{\rho}$,
this entails that $\om_{\xi_{\rho},\eta}\circ\pi_{\rho}=\om_{J\eta,\xi_{\rho}}\circ\pi_{\rho}$
for all $\eta\in\H_{\rho}$. Consequently, by \prettyref{lem:antipode_W_extended},
for every $i\in\mathfrak{I}$,
\begin{multline*}
\kappa\left[((\i\tensor(\om_{\eta_{i},\xi_{\rho}}\circ\pi_{\rho}))(W^{*}(\one\tensor x)))(\i\tensor(\om_{\xi_{\rho},\eta_{i}}\circ\pi_{\rho}))(W)\right]\\
\begin{aligned} & =\kappa\left[(\i\tensor(\om_{\xi_{\rho},\eta_{i}}\circ\pi_{\rho}))(W)\right]\kappa\left[((\i\tensor(\om_{\eta_{i},\xi_{\rho}}\circ\pi_{\rho}))(W^{*}(\one\tensor x)))\right]\\
 & =(\i\tensor(\om_{\xi_{\rho},\eta_{i}}\circ\pi_{\rho}))(W^{*})((\i\tensor(\om_{\eta_{i},\xi_{\rho}}\circ\pi_{\rho}))(W(\one\tensor x)))\\
 & =(\i\tensor(\om_{J\eta_{i},\xi_{\rho}}\circ\pi_{\rho}))(W^{*})((\i\tensor(\om_{\xi_{\rho},J\eta_{i}}\circ\pi_{\rho}))(W(\one\tensor x))).
\end{aligned}
\end{multline*}
Since $\left(\eta_{i}\right)_{i\in\mathfrak{I}}$ is an orthonormal
basis of $\H_{\rho}$ and $J$ is anti-unitary, $\left(J\eta_{i}\right)_{i\in\mathfrak{I}}$
is also an orthonormal basis of $\H_{\rho}$. Using the analog of
the above reasoning with $\left(J\eta_{i}\right)_{i\in\mathfrak{I}}$
in lieu of $\left(\eta_{i}\right)_{i\in\mathfrak{I}}$ and taking
$Y:=W$ and $X:=W(\one\tensor x)$, the bounded net
\[
\left(\sum_{i\in F}(\i\tensor(\om_{J\eta_{i},\xi_{\rho}}\circ\pi_{\rho}))(W^{*})((\i\tensor(\om_{\xi_{\rho},J\eta_{i}}\circ\pi_{\rho}))(W(\one\tensor x)))\right)_{F\in\mathcal{F}(\mathfrak{I})}
\]
converges strongly to $(\i\tensor\rho)(W^{*}W(\one\tensor x))=\rho(x)\one$.
All in all, $(\om\circ\kappa\circ(\i\tensor\rho))(W^{*}(\one\tensor x)W)=\rho(x)\om(\one)$
for every $\om\in\Lone{\G}$, hence $(\kappa\circ(\i\tensor\rho))(W^{*}(\one\tensor x)W)=\rho(x)\one=\kappa(\rho(x)\one)$,
proving \prettyref{eq:amen_nuc} by the injectivity of $\kappa$.
\end{proof}
We give two proofs of implication \prettyref{enu:amen_nuc_1_b}$\implies$\prettyref{enu:amen_nuc_1_c}.
The first one uses \prettyref{thm:amen_weak_contain}. The second
is shorter and more direct, but it basically uses the same idea. 
\begin{proof}[Proof of \prettyref{thm:amen_nuc}]
\prettyref{enu:amen_nuc_1_a}$\implies$\prettyref{enu:amen_nuc_1_b}:
assume that $\hat{\G}$ is co-amenable. Then $\G$ is amenable, so
that $\Cz{\hat{\G}}$ is nuclear by \citep[Theorem 3.3]{Bedos_Tuset_2003}.
Let $\rho\in\Cz{\hat{\G}}$ be a state such that $(\i\tensor\rho)(W)=\one$.
By a multiplicative domains argument, $\rho$ is a character of $\Cz{\hat{\G}}$
\citep[Proof of Theorem 3.1]{Bedos_Tuset_2003}, from which \prettyref{eq:amen_nuc}
readily follows.

\prettyref{enu:amen_nuc_1_b}$\implies$\prettyref{enu:amen_nuc_1_c},
first proof: suppose that such $\rho$ exists. Writing $(\H_{\rho},\pi_{\rho},\xi_{\rho})$
for the GNS construction of $\rho$ and $W_{\rho}:=(\i\tensor\pi_{\rho})(W)$,
we get 
\[
(\i\tensor\om_{\xi_{\rho}})\left[W_{\rho}^{*}(\one\tensor y)W_{\rho}\right]=\om_{\xi_{\rho}}(y)\one
\]
for every $y\in\pi_{\rho}(\Cz{\hat{\G}})$, hence for every $y$ in
the von Neumann algebra $M:=\overline{\pi_{\rho}(\Cz{\hat{\G}})}^{weak}\subseteq B(\H_{\rho})$.
Since $\Cz{\hat{\G}}$ is nuclear, $M$ is injective by \citep[Theorem IV.2.2.13]{Blackadar__oper_alg_book}.
Let $E$ be a conditional expectation from $B(\H_{\rho})$ onto $M$.
Notice that $W_{\rho}\in\M(\Cz{\G}\tensormin\pi_{\rho}(\Cz{\hat{\G}}))\subseteq\Linfty{\G}\tensorn M$.
Precisely as in \citep[Theorem 2.4]{Soltan_Viselter__note_amenability_LCQGs}
(or the relevant part of the proof of \prettyref{thm:amen_weak_contain}),
for all $z\in B(\H_{\rho})$ and $\om\in\Lone{\G}$ we have
\[
E\left((\om\tensor\i)\left[W_{\rho}^{*}(\one\tensor z)W_{\rho}\right]\right)=(\om\tensor\i)\left[W_{\rho}^{*}(\one\tensor Ez)W_{\rho}\right],
\]
thus 
\[
(\om_{\xi_{\rho}}\circ E)\left((\om\tensor\i)\left[W_{\rho}^{*}(\one\tensor z)W_{\rho}\right]\right)=(\om\tensor\om_{\xi_{\rho}})\left[W_{\rho}^{*}(\one\tensor Ez)W_{\rho}\right]=(\om_{\xi_{\rho}}\circ E)(z)\om(\one).
\]
In conclusion, the co-representation $W_{\rho}$ is left amenable
with $\om_{\xi_{\rho}}\circ E$ a left-invariant mean. By definition,
$W_{\rho}$ is weakly contained in $W$, so from \prettyref{thm:amen_weak_contain}
we infer that $W$ is left amenable. This evidently implies that $\G$
is amenable (in fact, the converse is also true by \citep[Theorem 4.1]{Bedos_Tuset_2003}).

\prettyref{enu:amen_nuc_1_b}$\implies$\prettyref{enu:amen_nuc_1_c},
second proof: embed $\Cz{\hat{\G}}$ in $\Cz{\hat{\G}}^{**}$ canonically,
and view $\rho$ as a normal state of $\Cz{\hat{\G}}^{**}$. We will
regard $W\in\M(\Cz{\G}\tensormin\Cz{\hat{\G}})$ both as an element
of $\Cz{\G}^{**}\tensorn\Cz{\hat{\G}}^{**}$ and as an element of
$\Cz{\G}^{**}\tensorn B(\Ltwo{\G})$ depending on the context. By
a weak$^{*}$-continuity argument, we have 
\[
(\i\tensor\rho)\left(W^{*}(\one\tensor y)W\right)=\rho(y)\one\qquad(\forall y\in\Cz{\hat{\G}}^{**}).
\]
The assumption that $\Cz{\hat{\G}}$ is nuclear is equivalent to $\Cz{\hat{\G}}^{**}$
being an injective operator system \citep[Theorem IV.3.1.12]{Blackadar__oper_alg_book}.
Therefore, the embedding $\M(\Cz{\hat{\G}})\hookrightarrow\Cz{\hat{\G}}^{**}$
extends to a unital completely positive map $\Phi:B(\Ltwo{\G})\to\Cz{\hat{\G}}^{**}$.
Consider the unital completely positive map $\i\tensor\Phi:\Cz{\G}^{**}\tensorn B(\Ltwo{\G})\to\Cz{\G}^{**}\tensorn\Cz{\hat{\G}}^{**}$
given by \prettyref{lem:id_tensor_Phi}. For every $\om\in\Cz{\G}^{*}$,
\[
(\om\tensor\i)\left((\i\tensor\Phi)(W)\right)=\Phi\left((\om\tensor\i)(W)\right)=(\om\tensor\i)(W)
\]
because $(\om\tensor\i)(W)\in\M(\Cz{\hat{\G}})$. This means that
$(\i\tensor\Phi)(W)=W$. Thus $W$ belongs to the multiplicative domain
of $\i\tensor\Phi$, so that $(\i\tensor\Phi)\left(W^{*}XW\right)=W^{*}(\i\tensor\Phi)(X)W$
for every $X\in\Cz{\G}^{**}\tensorn B(\Ltwo{\G})$. Let $\theta:=\rho\circ\Phi$.
For every $x\in B(\Ltwo{\G})$, 
\[
(\i\tensor\theta)\left(W^{*}(\one\tensor x)W\right)=(\i\tensor\rho)\left(W^{*}(\one\tensor\Phi(x))W\right)=\theta(x)\one.
\]
Consequently, $\theta|_{\Linfty{\G}}$ is a left-invariant mean of
$\G$.

\prettyref{enu:amen_nuc_1_a}$\implies$\prettyref{enu:amen_nuc_2_trivial_tau}
for every LCQG $\G$ because every character is a tracial state. When
$\G$ has trivial scaling group, \prettyref{enu:amen_nuc_2_trivial_tau}$\implies$\prettyref{enu:amen_nuc_1_b}
by \prettyref{prop:triv_tau_tracial_state}.
\end{proof}
For discrete quantum groups, amenability is equivalent to co-amenability
of the dual, and we thus have the following consequence of \prettyref{thm:amen_nuc}.
\begin{cor}
Let $\G$ be a discrete quantum group. Then $\G$ is amenable if and
only if condition \prettyref{enu:amen_nuc_1_b} of \prettyref{thm:amen_nuc}
holds.
\end{cor}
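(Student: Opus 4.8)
The plan is to obtain the corollary as an immediate consequence of \prettyref{thm:amen_nuc}, once it is combined with the equivalence---recalled just above the statement---between amenability of a discrete quantum group $\G$ and co-amenability of its dual $\hat{\G}$, i.e.\ condition \prettyref{enu:amen_nuc_1_a}. In fact I expect the point of the corollary to be that, in the discrete case, the chain \prettyref{enu:amen_nuc_1_a}$\implies$\prettyref{enu:amen_nuc_1_b}$\implies$\prettyref{enu:amen_nuc_1_c} of \prettyref{thm:amen_nuc} can be closed into a loop, so that all three conditions coincide.

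One direction requires nothing new. The implication \prettyref{enu:amen_nuc_1_b}$\implies$\prettyref{enu:amen_nuc_1_c} supplied by \prettyref{thm:amen_nuc} is valid for an arbitrary LCQG, so condition \prettyref{enu:amen_nuc_1_b} forces $\G$ to be amenable without any appeal to discreteness. This gives the ``if'' part directly.

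For the reverse direction I would argue as follows. Assume $\G$ is amenable. Since $\G$ is discrete, $\hat{\G}$ is compact, and for such $\G$ amenability is known to be equivalent to co-amenability of $\hat{\G}$ (the discrete case of the converse to \citep[Theorem 3.2]{Bedos_Tuset_2003}; see for instance \citep{Tomatsu__amenable_discrete}). Hence amenability of $\G$ yields condition \prettyref{enu:amen_nuc_1_a}, and feeding this into the implication \prettyref{enu:amen_nuc_1_a}$\implies$\prettyref{enu:amen_nuc_1_b} of \prettyref{thm:amen_nuc} produces condition \prettyref{enu:amen_nuc_1_b}, completing the equivalence. I do not anticipate any genuine obstacle: the entire content is already packaged in \prettyref{thm:amen_nuc} and the cited discrete equivalence, and the proof merely records that for discrete $\G$ conditions \prettyref{enu:amen_nuc_1_a}, \prettyref{enu:amen_nuc_1_b} and \prettyref{enu:amen_nuc_1_c} are all equivalent.
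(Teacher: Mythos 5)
Your proposal is correct and is exactly the paper's argument: the paper derives the corollary by noting that for discrete $\G$ amenability is equivalent to co-amenability of $\hat{\G}$ (condition \prettyref{enu:amen_nuc_1_a}), which closes the implication chain \prettyref{enu:amen_nuc_1_a}$\implies$\prettyref{enu:amen_nuc_1_b}$\implies$\prettyref{enu:amen_nuc_1_c} of \prettyref{thm:amen_nuc} into an equivalence.
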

We conjecture that condition \prettyref{enu:amen_nuc_1_b} is, in
fact, equivalent either to condition \prettyref{enu:amen_nuc_1_a}
or to condition \prettyref{enu:amen_nuc_1_c} for arbitrary LCQGs.
However, we were not able to verify this.
\begin{acknowledgement*}
The second-named author is indebted to Pawe{\l} Kasprzak and Adam
Skalski for intriguing conversations on the content of this paper
and related topics, which took place when he was visiting Adam Skalski
in Warsaw, and for their helpful remarks. The problem of extending
the equivalence between amenability of $G$ and nuclearity of $\Cr G$
for discrete groups $G$ to discrete quantum groups was suggested
to the second-named author several years ago by Piotr M.~So{\l}tan
(a similar problem was subsequently addressed in \citep{Soltan_Viselter__note_amenability_LCQGs}),
and for that he is grateful to him.
\end{acknowledgement*}
\bibliographystyle{amsalpha}
\bibliography{Amenability}

\end{document}